
\documentclass[reqno, 11pt]{amsart}

\usepackage{amsthm,amsmath,amssymb,amsfonts,bm}
\usepackage{stmaryrd}
\usepackage{bbold}
\usepackage{times}
\usepackage[all]{xy}
\usepackage{mathrsfs}
\usepackage{graphicx}
\usepackage{tikz}
\usetikzlibrary{matrix,calc,arrows}

\usepackage{comment}

\textwidth=34pc
\oddsidemargin=30pt
\evensidemargin=30pt


\theoremstyle{plain}
\newtheorem{thm}{Theorem}
\newtheorem{lem}[thm]{Lemma}

\newtheorem{prop}[thm]{Proposition}

\theoremstyle{definition}
\newtheorem*{question}{Question}

\newtheorem{example}[thm]{Example}

\newtheorem*{notat}{Notations and conventions}

\theoremstyle{remark}
\newtheorem*{remark}{Remark}


\newcommand{\Ker}{\operatorname{Ker}} 

\newcommand{\Hom}{\operatorname{Hom}}
\newcommand{\End}{\operatorname{End}}

\newcommand{\Der}{\operatorname{Der}}

\newcommand{\ann}{\operatorname{ann}}

\newcommand{\Spec}{\operatorname{Spec}}
\newcommand{\Fract}{\operatorname{Fract}}

\newcommand{\onto}{\twoheadrightarrow}

\newcommand{\into}{\hookrightarrow}
\newcommand{\tto}{\longrightarrow}
\newcommand{\iso}{\stackrel{\sim}{\tto}}

\newcommand{\gen}[1]{\langle{#1}\rangle}

\renewcommand{\bar}[1]{\overline{#1}}

\newcommand{\til}[1]{\widetilde{#1}}

\newcommand{\Char}{\operatorname{char}}

\DeclareMathOperator{\Rat}{Rat}

\renewcommand{\k}{\mathbb{k}}
\renewcommand{\phi}{\varphi}

\newcommand{\HSpec}{H\text{-}\!\Spec}
\newcommand{\PSpec}{\mathcal{P}\text{-}\!\Spec}

\newcommand{\HRat}{H\text{-}\!\Rat}

\newcommand{\sE}{\mathscr{E}}

\newcommand{\cP}{\mathcal{P}}

\newcommand{\cH}{\mathcal{H}}
\newcommand{\cC}{\mathcal{C}}

\newcommand{\fg}{\mathfrak{g}}

\newcommand{\e}{\varepsilon}

\newcommand{\CC}{\mathbb{C}}

\newcommand{\cat}[1]{\operatorname{\mathsf{#1}}}

\newcommand{\ModAlg}[1]{{}_{#1}\!\cat{Alg}}

\newcommand{\cen}{\mathcal{Z}}
\newcommand{\Q}{\operatorname{Q}\!}
\newcommand{\bdot}{\,\text{\raisebox{-.45ex}{$\boldsymbol{\cdot}$}}\,}

\newcommand{\byH}{\!\!:\!\! H}

\newcommand{\EH}{\sE^{H}}
\newcommand{\QH}{\Q^{H}\!}
\newcommand{\CH}{\cC^{H}\!}

\newcommand{\ot}{\otimes}
\newcommand{\ant}{{\mathsf{S}}}
\newcommand{\rhk}{\text{\scriptsize $\rightharpoonup$}}

\newcommand{\upin}{\text{\ \rotatebox{90}{$\in$}\ }}
\newcommand{\upsub}{\text{\ \rotatebox{90}{$\subseteq$}\ }}
\newdir{ >}{{}*!/-5pt/\dir{>}}

\newcommand{\acts}{%
  \!\mathrel{\begin{tikzpicture}[scale=.8,baseline=(current  bounding  box.south)] 
  \useasboundingbox (-.6,-.2) rectangle (.1,.2);
  \node at (0,-.06) {} edge[out=210,in=150,loop] ();
\end{tikzpicture}}\!\!\!\!
}

\newcommand{\Clo}[1]{\mathscr{V}{#1}}

\newcommand{\HClo}[1]{\mathscr{V}^H{#1}}
\newcommand{\kH}{\kappa_H}
\newcommand{\kHrat}{\kappa_H^{\Rat}}


\begin{document}

\title[Hopf Algebra Actions and Rational Ideals]%
{Hopf Algebra Actions and Rational Ideals}

\author{Martin Lorenz}

\address{Department of Mathematics, Temple University,
    Philadelphia, PA 19122}

\email{lorenz@temple.edu}

\urladdr{http://www.math.temple.edu/$\stackrel{\sim}{\phantom{.}}$lorenz}

\subjclass[2010]{16T05, 16T20}

\keywords{Hopf algebra action, prime spectrum,
rational ideal, Jacobson-Zariski topology, Dixmier-Moeglin equivalence}

\maketitle

\begin{abstract}
This note discusses a framework for the investigation of the prime spectrum 
of an associative algebra $A$ that is equipped
with an action of a Hopf algebra $H$. 
In particular, we study a notion of $H$-rationality for ideals of $A$
and comment on a possible Dixmier-Moeglin equivalence for $H$-prime ideals of $A$.
\end{abstract}

\maketitle


\section*{Introduction}

\subsection{} \label{SS:action}
Actions of a group or Lie algebra on a given algebra $A$ have proven a useful tool in
analyzing its prime spectrum, $\Spec A$; see, e.g., \cite{kG06}, \cite{mL09}. 
The present article explores the more
general situation where a Hopf algebra $H$ acts on $A$. Our particular focus
will be on a notion of rationality for prime ideals of $A$ that takes the $H$-action
into account. Rational ideals were introduced into non-commutative
algebra by Dixmier in connection with his investigation of primitive ideals in enveloping algebras
of finite-dimensional Lie algebras. A highlight of this work is the celebrated Dixmier-Moeglin
equivalence \cite{jD77}, \cite{cM80}: rationality is equivalent to primitivity
and also to local closedness for primes of enveloping algebras.
In the context of arbitrary associative algebras, a notion of rationality was defined in \cite{mL08}.

\subsection{}
Let $A$ be an associative algebra (with $1$) over a field $\k$
and let $H$ be a Hopf $\k$-algebra.
A (left) \emph{$H$-action} on $A$ is a $\k$-linear map
$H \ot_\k A \to A$, $h\ot a \mapsto h.a$, that makes $A$ into a 
left $H$-module and satisfies 
$h.(ab) =  (h_1.a)(h_2.b)$ and $h.1 =  \gen{\e,h} 1$
for all $h\in H$and $a,b \in A$. Here, $\Delta h = h_1 \ot h_2$ 
is the comultiplication $\Delta \colon H \ot H \to H$ and $\e \colon H \to \k$ is the counit. 
We will write $H \acts A$ to indicate such an action.
An algebra $A$ that is equipped with an $H$-action is called an \emph{$H$-module algebra}.
With algebra maps that are also $H$-module maps as morphisms, $H$-module algebras
form a category, $\ModAlg H$. 

\subsection{}
\label{SS:Intro3}
Let $A \in \ModAlg H$. An ideal $I$ of $A$ that is also an $H$-submodule 
will be referred to as an \emph{$H$-ideal}. In this case, $A/I \in \ModAlg H$.
The sum of all $H$-ideals of $A$ that are
contained in a given arbitrary ideal $I$, clearly the unique maximal $H$-ideal of $A$ 
that is contained in $I$, will be called the \emph{$H$-core} of $I$ and
denoted by $I\byH$. Explicitly,
\begin{equation*} 
I\byH = \{ a \in A \mid H.a \subseteq I \}.
\end{equation*}
If $A \neq 0$ and the product of any two
nonzero $H$-ideals of $A$ is again nonzero, then $A$ is said to be \emph{$H$-prime}. 
An $H$-ideal $I$ of $A$ is called $H$-prime if $A/I$ is $H$-prime. 
We will denote the collection of all $H$-primes of $A$ 
by $\HSpec A$. It is easy to see that $H$-cores of prime ideals are $H$-prime. Thus,
we have a map,
\[
\kH = \bdot\byH \colon \Spec A \to \HSpec A . 
\]
A routine application of Zorn's Lemma shows that $\kH$ is surjective 
if the (two-sided) ideal of $A$ that is generated by $H.a$
is finitely generated for each $a \in A$ \cite[Exercise 10.4.4]{mL18}. This certainly holds
if $A$ is noetherian or the action $H \acts A$ is locally finite.
The fibers of $\kH$ are called the \emph{$H$-strata} of $\Spec A$:
\begin{equation*}
\Spec_I A:= \kH^{-1}(I) = \{ P \in \Spec A \mid P\byH = I\} \qquad (I \in \HSpec A).
\end{equation*}

\subsection{}
Rational and $H$-rational ideals are special prime and $H$-prime ideals, respectively,
that are of particular interest.
They are defined in Section~\ref{S:Hrat} below in terms of the symmetric ring of
quotients rather than the right Amitsur-Martindale quotient ring that was employed in \cite{mL08}. 
While this makes no essential difference, because the centers of these quotient rings are isomorphic
for semiprime rings \cite[E.3]{mL18}, 
symmetric quotient rings have some advantages such as their apparent
right-left symmetry. Sections~\ref{S:quotients} and \ref{S:Hcenter} deploy the requisite
background material on symmetric quotient rings. 
In Section~\ref{S:Hrat}, we show that the core map $\kH$
sends rational ideals to $H$-rational ideals. Thus, denoting the collections of 
rational and $H$-rational ideals of $A$ by $\Rat A$ and $\HRat A$, respectively,
we have a diagram of maps,
\begin{equation} 
\label{E:SpecDiagram}
\begin{tikzpicture}[baseline=(current  bounding  box.357), >=latex, scale=.7,
bij/.style={above,sloped,inner sep=0.5pt}]
\matrix (m) [matrix of math nodes, row sep=.1em,
column sep=4em, text height=1.5ex, text depth=0.25ex]
{\Spec A & \HSpec A \\ \upsub & \upsub \\ \Rat A & \HRat A \\};
\draw[->] (m-1-1) edge node[below]{\scriptsize $\kH$} (m-1-2);
\draw[->] (m-3-1) edge node[below]{\scriptsize $\kHrat$} (m-3-2);
\end{tikzpicture} 
\end{equation}
The notion of $H$-rationality and the diagram \eqref{E:SpecDiagram} 
have previously been explored in some special cases including the following.

\subsubsection{} 
\label{SSS:Ug}
If $H= U\fg$ is the enveloping algebra of a Lie $\k$-algebra $\fg$, then 
an $H$-module algebra structure on $A$
amounts to a Lie homomorphism $\fg \to \Der A$, the Lie algebra of
derivations of $A$. A special case of this setup arises from a \emph{Poisson structure} 
on $A$, that is,
a $\k$-bilinear map $\{\bdot\,,\bdot\} \colon A \times A \to A$ such that
$\fg:=(A,\{\bdot\,,\bdot\})$ is a Lie $\k$-algebra and $\{a,\bdot\} \in \Der A$ for all $a \in A$. 
We then have an action of $H= U\fg$ on $A$ that is determined by $x.a = \{x,a\}$ for $x\in \fg$, $a\in A$. 
In this setting, $H$-cores and $H$-primes are called \emph{Poisson cores} and
\emph{Poisson primes}, respectively, $\HSpec A$ is denoted by $\PSpec A$ or similar,
and $H$-rational ideals are called \emph{Poisson rational}.
See, for example,  \cite{kBiG03}, \cite{kG06}, \cite{BLSM17}. 

\subsubsection{}
\label{SSS:kG}
References \cite{mL08}, \cite{mL09} consider \eqref{E:SpecDiagram}
for a group algebra $H = \k G$, with particular emphasis on the case where $\k$ is algebraically closed
and $G$ is an affine algebraic $\k$-group that acts rationally on $A$.
In this setting, both maps in \eqref{E:SpecDiagram} are surjective and their fibers are known.
The fibers of $\kHrat$ are exactly the $G$-orbits in $\Rat A$. When $G$ is connected,
the strata $\Spec_I A$ can be described in terms of the spectra of certain
commutative algebras; more generally, this holds for the $H$-strata of
any ``integral'' action $H \acts A$ with $H$
cocommutative \cite{LNY20}.
If $G$ is an algebraic $\k$-torus, for example, then each
stratum is homeomorphic to the prime spectrum of a suitable commutative 
Laurent polynomial algebra over some $\k$-field \cite{mL13}.

\subsection{}
\label{SSS:DM}
The Dixmier-Moeglin equivalence ties, under suitable hypotheses, 
the representation-theo\-retic notion of  ``primitivity'' to the the field-theo\-retic
notion of ``rationality'' and to the topological notion of ``local closedness.''
Section~\ref{S:topology} describes a topology on $\HSpec A$,
a straightforward generalization of the familiar Jacobson-Zariski topology on $\Spec A$. 
This topology was considered earlier
for group actions, differential algebras, and in other settings;
see, e.g., \cite{mL09}, \cite{kG06}, \cite{kGeL00a}.
Aside from establishing a general framework, this section and the rest of this note offer 
little in the way of substantive results. 
Hopefully, the setup described here will lead to deeper 
investigations into the topological aspects of the
Dixmier-Moeglin equivalence, perhaps along the lines of the interesting work in \cite{jBxWdY19},  
or into some other possible avenues for future work that are pointed out below.


\begin{notat}
We will work over an arbitrary base field $\k$ and write
$\ot = \ot_\k$\,. The notations and
hypotheses introduced in the foregoing will remain in effect
for the remainder of this paper. In particular, $H$ will always be a Hopf $\k$-algebra
with antipode $\ant$ and counit $\e$, 
and $A$ will be a left $H$-module $\k$-algebra with action $H \acts A$
written as $h \ot a \mapsto h.a$\,. 
We assume throughout that the antipode $\ant$ is bijective.
\end{notat}


\section{Background on quotient rings} 
\label{S:quotients}

In this section, $R$ is an arbitrary ring (with $1$). We recall some basics concerning
the symmetric ring of quotients, $\Q R$. For details, see \cite[Appendix E]{mL18}. 
Throughout, we let 
\[
\sE = \sE(R)
\]
denote the collection of all (two-sided) ideals 
of $R$ having zero left and right annihilator in $R$.


\subsection{Symmetric quotient rings} 
\label{SS:Q}
 
The ring $R$ is a subring of $\Q R$. Moreover, the following hold for any $q \in \Q R$:
\begin{equation} 
\label{E:Dq}
D_q : = \{ r \in R \mid qRr \subseteq R \text{ and } rRq \subseteq R \} \in \sE.
\end{equation}
\begin{equation} 
\label{E:annQ}
I \in \sE, q \neq 0 \implies qI \neq 0 \text{ and } Iq \neq 0.
\end{equation}
For any $I \in \sE$, let $\Hom({}_RI,{}_RR)$ and  $\Hom(I_R,R_R)$ denote the collections of all left and right
$R$-module maps $I \to R$, respectively, and define
\begin{equation*}
\cH_I:= \big\{ (f,g) \in \Hom({}_RI,{}_RR) \times \Hom(I_R,R_R) \mid 
f(a)b = ag(b) \ \forall a,b \in I \big\}.
\end{equation*}
Writing $f(a) = af$ and $g(b) = gb$, the condition $f(a)b = ag(b)$ above becomes a variant of
associativity: $(af)b = a(gb)$. In fact,
\begin{equation} 
\label{E:H_I}
I \in \sE, (f,g) \in \cH_I \implies \exists q \in \Q R: aq = af, qb = gb \quad \forall a,b \in I.
\end{equation}
By \eqref{E:annQ}, the above $q$ is unique; we will write $q = [f,g] \in \Q R$\,.

\subsection{The extended center} 
\label{SS:C}

The center $\cC R:= \cen(\Q R)$ is called the \emph{extended center} of $R$; it
coincides with the centralizer of $R$ in $\Q R$:
\begin{equation}
\label{E:C(A)}
\begin{aligned}
\cC R &= \big\{ q \in \Q R \mid qr = rq \ \forall r \in R \big\} \\
&= \big\{ q \in \Q R \mid \exists I \in \sE: qa = aq \ \forall a \in I \big\} \,.
\end{aligned}
\end{equation}
In particular, $\cen R \subseteq \cC R$.
If $\cen R = \cC R$, then $R$ is called \emph{centrally closed}. In general, 
the following subring of $\Q R$ has center $\cC R$ and may be strictly larger than $R$:
\[
\til R:= R(\cC R) \subseteq \Q R.
\]
If $R$ is semiprime, then $\til R$ is a centrally closed ring
\cite[Theorem 3.2]{wBwM79}, called the \emph{central closure} of $R$.
If $R$ is a $\k$-algebra, then so are $\Q R$ and $\til{R}$, because $\cen R \subseteq \cC R = \cen \til{R}$.

\subsection{An extension lemma} 
\label{SS:extension}

We will need a version of
\cite[Lemma 4]{mL08} for symmetric rings of
quotients. The proof is essentially identical to the one in \cite{mL08}, 
but we include it here in full detail because of its somewhat technical nature.

Recall that a ring homomorphism $\phi \colon R \to S$ is called
\emph{centralizing} if the ring $S$ is generated by the image $\phi R$ and
the centralizer $C_S(\phi R) = \{ s \in S \mid s \phi(r) = \phi(r)s
\ \forall r \in R \}$. Any such $\phi$ maps the center
$\cen R$ to $\cen S$ and, for any ideal $I$ of $R$, the ideal of $S$
that is generated by $\phi I$ is given by $\gen{\phi I} := (\phi I)C_S(\phi R) = C_S(\phi R)(\phi I)$.

\begin{lem} 
\label{L:extension}
Let $\phi \colon R \to S$ be a centralizing ring homomorphism and let
\[
\cC_\phi = \{ q \in \cC R \mid \gen{\phi D_q} \in \sE(S)  \} .
\]
Then $R\cC_\phi$ is a subring of $\til R$ containing $R$. The map
$\phi$ extends uniquely to a homomorphism
$\til{\phi} \colon R\cC_\phi \to \til S$ which is centralizing. In particular,
$\til{\phi}\cC_\phi \subseteq \cC S$.
\end{lem}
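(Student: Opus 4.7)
The plan is to construct $\til\phi$ in stages, paralleling the construction of $\til R$ via \eqref{E:H_I}. First I would check that $\cC_\phi$ is a subring of $\cC R$ containing $1$. Using \eqref{E:Dq} and the fact that $q, q' \in \cC R$ commute with $R$, one verifies the inclusions $D_q \cap D_{q'} \subseteq D_{q\pm q'}$ and $D_q D_{q'} \subseteq D_{qq'}$ in $\sE(R)$. Applying $\phi$ then gives $\gen{\phi D_{q\pm q'}} \supseteq \gen{\phi D_q} \cap \gen{\phi D_{q'}}$ and $\gen{\phi D_{qq'}} \supseteq \gen{\phi D_q} \cdot \gen{\phi D_{q'}}$, and since $\sE(S)$ is closed under finite products and under passage to larger ideals (short annihilator arguments), both $q \pm q'$ and $qq'$ lie in $\cC_\phi$. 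That $1 \in \cC_\phi$ is automatic, since $D_1 = R$ and $\gen{\phi R} = S$ because $\phi$ is centralizing.

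The heart of the proof is constructing, for each $q \in \cC_\phi$, an element $\til\phi(q) \in \cC S$ characterized by $\til\phi(q)\phi(d) = \phi(qd)$ for $d \in D_q$. Setting $J := \gen{\phi D_q} \in \sE(S)$, I would define
\begin{equation*}
\til f\Bigl(\sum_i \phi(d_i) c_i\Bigr) := \sum_i \phi(qd_i) c_i, \qquad \til g\Bigl(\sum_i c_i \phi(d_i)\Bigr) := \sum_i c_i \phi(d_i q),
\end{equation*}
for $d_i \in D_q$, $c_i \in C_S(\phi R)$. Well-definedness of $\til f$ is the critical point: if $\sum_i \phi(d_i) c_i = 0$, then for every $b \in D_q$,
\begin{equation*}
\Bigl(\sum_i \phi(qd_i) c_i\Bigr) \phi(b) = \sum_i \phi(d_i qb) c_i = \Bigl(\sum_i \phi(d_i) c_i\Bigr) \phi(qb) = 0,
\end{equation*}
using $qb \in R$, centrality of $q$ on $R$, and $c_i \in C_S(\phi R)$; multiplying by $C_S(\phi R)$ on the right gives $\bigl(\sum_i \phi(qd_i) c_i\bigr) J = 0$, and \eqref{E:annQ} applied to $J \in \sE(S)$ forces the sum to vanish. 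A symmetric computation handles $\til g$, the identity $\til f(a)b = a\til g(b)$ on $J$ is routine, and \eqref{E:H_I} delivers $\til\phi(q) := [\til f, \til g] \in \Q S$. That $\til\phi(q) \in \cC S$ follows by showing $(\til\phi(q) s - s\til\phi(q))J = 0$ for $s \in S$, noting that $S = \phi(R)\cdot C_S(\phi R)$ and that $\til\phi(q)$ commutes with both factors when multiplied into $J$.

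It remains to extend to a ring homomorphism $\til\phi \colon R\cC_\phi \to \til S$ by $\til\phi(\sum_i r_i q_i) := \sum_i \phi(r_i)\til\phi(q_i)$. The ring homomorphism properties on $\cC_\phi$ are verified by comparing both sides on appropriate ideals in $\sE(S)$: additivity on $\gen{\phi(D_q \cap D_{q'})}$, and multiplicativity on $\gen{\phi(D_q D_{q'} D_q)}$, the latter requiring the direct verification that $d \in D_q D_{q'} D_q$ implies $d \in D_{q'} \cap D_{qq'}$ and $q'd \in D_q$, making $\til\phi(q)\til\phi(q')\phi(d) = \phi(qq'd) = \til\phi(qq')\phi(d)$ available. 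Both well-definedness and uniqueness of the extension on $R\cC_\phi$ reduce to the same principle: if $\sum_i r_i q_i = 0$ in $\til R$, then multiplying by $d$ in a common member of $\sE(R)$ (e.g. $\prod_i D_{q_i}$) and applying $\phi$ shows that $\sum_i \phi(r_i)\til\phi(q_i)$ annihilates an ideal of $S$ in $\sE(S)$, hence vanishes by \eqref{E:annQ}. The centralizing property of $\til\phi$ then follows from $\til\phi\cC_\phi \subseteq \cC S$. The main obstacle throughout is the recurring well-definedness issue at the core of the second step: it amounts to transferring the density property \eqref{E:annQ} from $R$ to $S$ along $\phi$, which is precisely what the condition defining $\cC_\phi$ is designed to guarantee.
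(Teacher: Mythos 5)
Your proposal is correct and follows essentially the same route as the paper's proof: closure of $\cC_\phi$ under the ring operations via containments among the ideals $D_q$ together with the product/upward-closure properties of $\sE(S)$, construction of $\til\phi(q)$ on $\gen{\phi D_q}$ by $\sum_i\phi(d_i)c_i \mapsto \sum_i\phi(qd_i)c_i$ with well-definedness settled exactly as in the paper by multiplying into $\gen{\phi D_q}$ and invoking the zero-annihilator property, then the passage to $R\cC_\phi$ and its uniqueness checked against suitable members of $\sE(S)$ via \eqref{E:annQ}. The only quibble is that the stated inclusion $\gen{\phi D_q}\cap\gen{\phi D_{q'}} \subseteq \gen{\phi D_{q\pm q'}}$ is not justified as written, but your own product containment $\gen{\phi D_q}\cdot\gen{\phi D_{q'}} \subseteq \gen{\phi(D_qD_{q'})} \subseteq \gen{\phi D_{q\pm q'}}$ (using $D_qD_{q'}\subseteq D_q\cap D_{q'}\subseteq D_{q\pm q'}$ and closure of $\sE(S)$ under products and larger ideals) already suffices, which is precisely how the paper argues.
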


\begin{proof}
Below, we put
$C:= C_S(\phi R)$ and $I_q:= \gen{\phi D_q} = (\phi D_q)C$  for $q \in \Q R$.
Let 
\[
R_\phi = \{ q \in \Q R \mid I_q \in \sE(S) \}.
\]
Since $1 \in D_q$ for $q \in R$, we certainly have $R
\subseteq R_\phi$. For  any $q,q' \in \Q R$, one checks that
$D_q \cap D_{q'} \subseteq D_{q+q'}$ and
$D_{q'}D_q \subseteq D_{qq'}$. Hence $I_{q'}I_q \subseteq I_{qq'} \cap I_{q+q'}$. If $I_q$ and $I_{q'}$ both belong to
$\sE(S)$, then $I_{q'}I_q \in \sE(S)$ and hence also $I_{q+q'}, I_{qq'} \in \sE(S)$. Thus, $R_\phi$
is a subring of $\Q R$. Note that $\cC_\phi = \cC R \cap R_\phi = 
\cen R_\phi$. So $R\cC_\phi$ is also a subring of
$\Q R$, containing $R$.

It remains to construct a ring homomorphism
$\til{\phi} \colon R\cC_\phi \to \til S$ that extends $\phi$.
Such an extension will necessarily be unique by \eqref{E:annQ}, because $\phi(qd) = \til\phi(q)\phi(d)$
for $q \in R\cC_\phi$, $d \in D_q$ and $I_q \in \sE(S)$.  

First, we define a ring homomorphism $\til{\phi} \colon \cC_\phi  \to \cC S$. 
To this end, let $q \in \cC_\phi$ be given. Define $f_q \colon I_q \to S$ by
\begin{equation*}
f_q(\sum_i \phi(d_i)c_i):= \sum_i \phi(qd_i)c_i \qquad (d_i \in D_q, c_i \in C).
\end{equation*}
To see that $f_q$ is well defined, let $d \in D_q$. Then
$\phi(dq)\sum_i \phi(d_i)c_i  = \sum_i \phi(dqd_i) c_i  = \phi(d) \sum_i \phi(q d_i)c_i$.
Thus, if $\sum_i \phi(d_i)c_i = \sum_j \phi(d'_j)c'_j$ with $d_i,d'_j \in D_q$ and $c_i,c'_j \in C$, then 
\[
0 = \phi(D_q q)\Big( \sum_i \phi(d_i)c_i - \sum_j \phi(d'_j)c'_j \Big)  
= \phi D_q \Big( \sum_i \phi(qd_i)c_i - \sum_j \phi(qd'_j)c'_j \Big)
\]
and so $\sum_i \phi(qd_i)c_i = \sum_j \phi(qd'_j)c'_j$ because $I_q \in \sE(S)$. Therefore, $f_q$ is well defined.
Next, we show that $f_q$ is an $(S,S)$-bimodule map. For $s = \phi(r)c \in S$ with $r \in R$, $c \in C$
and $x = \sum_i \phi(d_i)c_i \in I_q$\,, we compute using the fact that $qr=rq$,
\[
\begin{aligned}
f_q(sx) &= f_q( \sum_i \phi(rd_i)cc_i) = \sum_i \phi(qrd_i)cc_i \\
&= \sum_i \phi(rqd_i)cc_i = s\sum_i \phi(qd_i)c_i = s f_q(x).
\end{aligned}
\]
Similarly, $f_q(xs) = f_q(x)s$. So $f_q$ is indeed an $(S,S)$-bimodule map; equivalently, the pair $(f_q,f_q)$
belongs to the set $\cH_{I_q}$ in \eqref{E:H_I}. The element $t = [f_q,f_q] \in \Q S$ satisfies
$xt = f_q(x) = tx$ for all $x \in I_q$. By \eqref{E:C(A)}, it follows that $t \in \cC S$. Defining
$\til{\phi}(q):= t$ we obtain a map $\til\phi \colon \cC_\phi \to \cC S$. The definition of $\til{\phi}$ 
gives 
\[
\til{\phi}(q)\phi(d) = f_q(\phi(d)) = \phi(qd) \qquad (q\in \cC_\phi, d \in D_q).
\]
In particular, for $q,q' \in \cC_\phi$ and $d \in D_q$, $d' \in D_{q'}$, we obtain
$\til{\phi}(qq')\phi(d'd) = \phi(qq'd'd) = \til\phi(q)\phi(q'd'd) = \til\phi(q)\til\phi(q')\phi(d'd)$,
which implies $\til{\phi}(qq') = \til\phi(q)\til\phi(q')$ because $I_{q'}I_q \in \sE(S)$.
Similarly, one checks that $\til{\phi}(q+q') = \til\phi(q)+\til\phi(q')$; so $\til\phi$ is a ring
homomorphism.

Finally, define $\til{\phi} \colon R\cC_\phi \to \til S$ by 
\[
\til{\phi}(\sum_i r_i q_i) = \sum_i \phi(r_i)\til{\phi}(q_i) \qquad (r_i \in R, q_i \in \cC_\phi).
\]
For well definedness, assume $\sum_i r_i q_i = \sum_j r'_j q'_j$ and
let $d \in D = \bigcap_{i,j} (D_{q_i} \cap D_{q'_j})$. Then 
\begin{equation*}
\begin{aligned}
\Big( \sum_i \phi(r_i)\til{\phi}(q_i) - \sum_j \phi(r'_j)\til{\phi}(q'_j) \Big) \phi(d) 
&= \sum_i \phi(r_i)\phi(q_id) - \sum_j \phi(r'_j)\phi(q'_jd) \\
&= \sum_i \phi(r_iq_id) - \sum_j \phi(r'_jq'_jd) \\
&= \phi \Big(\big(\sum_i r_iq_i - \sum_j r'_jq'_j \big)d \Big) = 0.
\end{aligned}
\end{equation*}
Therefore, $\sum_i \phi(r_i)\til{\phi}(q_i) = \sum_j \phi(r'_j)\til{\phi}(q'_j)$ 
by \eqref{E:annQ}, because $\gen{\phi(D)} \in \sE(S)$. This proves well definedness 
of $\til\phi$. The fact that $\til\phi$ is a centralizing ring homomorphism
follows readily from the corresponding properties of $\phi$ and $\til\phi\big|_{\cC_\phi}$.
This completes the proof.
\end{proof}


\section{The extended $H$-center} 
\label{S:Hcenter}

In this section, we return to $H$-actions. We fix $A \in \ModAlg H$ and put $\sE = \sE(A)$. We also put
\[
\EH = \{ I \in \sE \mid I \text{ is an $H$-ideal} \}.
\]


\subsection{Extended $H$-invariants}
\label{SS:QH A}

It may not always be possible to extend the $H$-action on $A$ to an
action $H \acts \Q A$. Thus, the familiar definition of $H$-invariants,
\[
A^H = \{ a \in A \mid h.a = \gen{\e,h} a \text{ for all } h \in H \}, 
\]
is not directly applicable to $\Q A$ in this form. Instead, using the ideal $D_q \in \sE$ in \eqref{E:Dq}, we define
\[
\QH A := \big\{ q \in \Q A \mid h.(dq) = (h.d)q,  h.(qd) = q(h.d) \ \forall h \in H, d \in D_q \big\}.
\]
This makes sense, since the above equations only involve the $H$-action on elements of $A$. 

\begin{lem}
\label{L:QH A}
\begin{enumerate}
\item
$D_q \in \EH$ for any $q \in \QH A$. 
\item
Let $q \in \Q A$. If there is  some 
$I \in \EH$ such that $I \subseteq D_q$ and $h.(aq) = (h.a)q$, $h.(qa) = q(h.a)$ for all $h \in H$, $a \in I$, 
then $q \in \QH A$.
\item
$\QH A$ is a subalgebra of $\Q A$ with $A^H \subseteq \QH A$.
\end{enumerate}
\end{lem}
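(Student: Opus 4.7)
The plan is to prove (a), (b), (c) in sequence, since (c) invokes both. For (a), fix $d \in D_q$ and $h \in H$, and verify $h.d \in D_q$ by showing $(h.d)aq \in A$ for all $a \in A$ (the mirror condition $qa(h.d) \in A$ is symmetric). The Hopf algebra identity $\sum h_{(1)} \otimes h_{(2)}\ant(h_{(3)}) = h \otimes 1$, applied to the linear map $x \otimes y \mapsto (x.d)(y.a)q$, rewrites $(h.d)\,a\,q = \sum_{\Delta^2 h} (h_{(1)}.d)\bigl(h_{(2)}.(\ant(h_{(3)}).a)\bigr)\,q$. Grouping the first two Sweedler factors as $\Delta(h^{(1)})$ via $(\Delta \otimes 1)\Delta h$ (so that $h_{(3)} = h^{(2)}$), and setting $b := \ant(h^{(2)}).a$, each outer summand collapses by the product rule to $h^{(1)}.(d b) \cdot q$. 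Since $D_q$ is a two-sided ideal of $A$, $db \in D_q$, so the defining property of $q \in \QH A$ identifies this with $h^{(1)}.\bigl((db) q\bigr) \in A$; summation yields $(h.d)aq \in A$.

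For (b), set $\Psi(h) := h.(dq) - (h.d)q$ and $\Psi'(h) := h.(qd) - q(h.d)$ in $\Q A$ for fixed $d \in D_q$, $h \in H$. Computing $h.(dqa)$ for $a \in I$ in two ways—as $h.((dq)a)$ by the product rule, and as $h.(d(qa))$ using the hypothesis $h_{(2)}.(qa) = q(h_{(2)}.a)$—yields the identity $\sum \Psi(h_{(1)})(h_{(2)}.a) = 0$; an analogous identity for $\Psi'$ comes from $h.(qda)$ (using $da \in I$ since $I$ is an ideal). Now apply the linear map $L\colon H^{\otimes 3} \to \Q A$, $L(x \otimes y \otimes z) := \Psi(x)\bigl(y.(\ant(z).a)\bigr)$, to $\Delta^2 h$. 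The antipode identity produces $L(\Delta^2 h) = \Psi(h)\,a$. But decomposing $\Delta^2 h$ via $(\Delta \otimes 1)\Delta h$ instead, and noting $\ant(h^{(2)}).a \in I$ by $H$-stability of $I$, each inner sum is an instance of $\sum \Psi(k_{(1)})(k_{(2)}.b) = 0$ with $b \in I$, so $L(\Delta^2 h) = 0$. Hence $\Psi(h)\,a = 0$ for every $a \in I$, and \eqref{E:annQ} delivers $\Psi(h) = 0$; the same argument yields $\Psi'(h) = 0$. The crux of the proof lies here: a naive antipode manipulation only gives the tautology $\Psi(h)a = \Psi(h)a$, and the essential trick is to recognize $L(\Delta^2 h)$ as the common image of a single element of $H^{\otimes 3}$ evaluated via two different coassociative decompositions.

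For (c), $A^H \subseteq \QH A$ is immediate: for $a \in A^H$, one has $D_a = A$, and the product rule collapses to $h.(da) = (h.d)a$ (and dually). Closure under addition follows from (b) with $I = D_q \cap D_{q'} \in \EH$ (intersection of $H$-ideals of $\sE$, contained in $D_{q+q'}$); equivariance on $I$ is term-by-term. Closure under multiplication uses $I = D_{q'}\cdot D_q \in \EH$ (contained in $D_{qq'}$): for $a \in I$ one has $a \in D_q$ and $aq \in D_{q'}$, so $h.(a(qq')) = h.((aq)q') = (h.(aq))q' = ((h.a)q)q' = (h.a)(qq')$ by applying the equivariances of $q$ and $q'$ in turn, and the mirror calculation handles $h.((qq')a) = (qq')(h.a)$.
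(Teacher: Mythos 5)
Your proof is correct and takes essentially the same route as the paper's: the same annihilator argument via \eqref{E:annQ}, and the same choices $D_q\cap D_{q'}$ and $D_{q'}D_q$ in part (c). The only difference is cosmetic: where the paper invokes the standard identities \eqref{E:identity} directly (in (a), and again in (b) applied to $(h.(dq))a$ with the hypothesis used in the middle step), you re-derive the same antipode/coassociativity manipulation inline, in (b) packaged as the convolution identity $\sum\Psi(h_{(1)})(h_{(2)}.a)=0$ untwisted by $\ant$; note only that your ``symmetric'' mirror computations implicitly use the $\ant^{-1}$-versions of these identities, which is fine since the antipode is assumed bijective.
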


\begin{proof}
(a)
We will use the following identities in $A$; see \cite[Exercise 10.4.1]{mL18}:
\begin{equation}
\label{E:identity}
(h.a)b = h_1.\big( a(\ant(h_2).b)\big) ,\
a(h.b) = h_2.\big( (\ant^{-1}(h_1).a)b\big) 
\qquad (a,b \in A, h \in H). 
\end{equation}
Now let $q \in \QH A$ and $d \in D_q$ be given. Then, for any $a \in A$ and $h \in H$, 
\[
(h.d)aq = \big(h_1.\big( d(\ant(h_2).a)\big)\big)q = h_1.\big( d(\ant(h_2).a)q \big) ,
\]
where the second equality follows from $(h.d')q = h.(d'q)$ for all $h \in H$, $d' \in D_q$, because
$d(\ant(h_2).a) \in D_q$\,. The last expression belongs to $A$; so $(H.D_q)Aq \subseteq A$.
The inclusion $qA(H.D_q) \subseteq A$ follows similarly from the computation
\[
qa(h.d) = q h_2.\big( (\ant^{-1}(h_1).a)d \big) = h_2.\big( q(\ant^{-1}(h_1).a)d \big) .
\]
This shows that $H.D_q \subseteq D_q$\,.

\medskip

(b)
Let $q \in \Q A$ and $I \in \EH$ be as in the statement of (b). We need to check that 
$h.(dq) = (h.d)q$ and $h.(qd) = q(h.d)$ for $h \in H$, $d \in D_q$\,. For any $a \in I$, the first
identiy in \eqref{E:identity} gives
\[
(h.(dq))a \underset{\eqref{E:identity}}{=} h_1.\big( dq(\ant(h_2).a)\big)
= h_1.\big( d(\ant(h_2).(qa))\big) \underset{\eqref{E:identity}}{=} (h.d) qa.
\]
Thus, $(h.(dq) - (h.d) q)I = 0$ and so $h.(dq) = (h.d) q$ by \eqref{E:annQ}. The equality
$h.(qd) = q(h.d)$ is proved similarly.

\medskip

(c)
The defining conditions of $\QH A$ are readily checked for any $q \in A^H$, with $D_q = A$.
Thus, $A^H \subseteq \QH A$. To show that $\QH A$ is a subalgebra of $\Q A$, let $q,q' \in \QH A$
be given. Then the ideal  $I = D_q \cap D_{q'}$ belongs to $\EH$ and $I \subseteq D_{q+q'}$. 
Moreover, for any $a \in I$ and $h \in H$, one has 
\[
h.(a(q+q')) = h.(aq)+ h.(aq') = (h.a)q + (h.a)q' = (h.a)(q+q').
\]
Similarly, $h.((q+q')a) = (q+q')(h.a)$. In view of (b), it follows that $q+q' \in \QH A$.
The proof of $qq' \in \QH A$ is analogous, using $I  = D_{q'}D_q$\,.
\end{proof}

\begin{remark}
Lemma~\ref{L:QH A}(a) shows that $\QH A$ is contained in a certain subring of $\Q A$, the symmetric
ring of quotients for the ideal filter $\EH$; see \cite{mC86}, \cite{sM93a}. Denoting this ring of quotients by
$\Q_{\EH}A$, 
it has been shown in these references that $\Q_{\EH}A \in \ModAlg H$ via a 
unique extension of the action $H \acts A$ to $\Q_{\EH}A$. Hence, the algebra of $H$-invariants,
$(\Q_{\EH}A)^H$, is defined as usual. In fact,
\[
(\Q_{\EH}A)^H = \QH A.
\]
To see this, let $q \in (\Q_{\EH}A)^H$. Then $h.(dq) = (h_1.d)(h_2.q) = (h_1.d) \gen{\e,h_2}q = (h.d)q$
for all $h \in H$ and $d \in D_q$. Similarly, $h.(qd) = q(h.d)$; so $q \in \QH A$.
Conversely, let $q \in \QH A$. Then, for any $h \in H$ and $d \in D_q$, the first identity in \eqref{E:identity}
gives 
\[
(h.q)d = h_1.\big( q(\ant(h_2).d)\big) = h_1\ant(h_2).(qd) = \gen{\e,h} qd,
\] 
where the second equality uses that $q \in \QH A$ and $\ant(h_2).d \in D_q$ by Lemma~\ref{L:QH A}(a).
Therefore, $h.q = \gen{\e,h} q$ by \eqref{E:annQ} and so $q \in (\Q_{\EH}A)^H$.
\end{remark}

\begin{example}
If $H$ is pointed, then $\Q A \in \ModAlg H$ via a 
unique extension of the action $H \acts A$ to $\Q A$ \cite[2.3]{sM93a}. Hence
$(\Q A)^H$ is defined. As in the Remark, it follows that $\QH A = (\Q A)^H$.
\end{example}

\subsection{Extended $H$-centers}
\label{SS:CH A}

We define the \emph{extended $H$-center} of $A$ by
\[
\CH A = \cC A \cap \QH A .
\]

\begin{example}
If $H$ is cocommutative, then the center of any $H$-module algebra is $H$-stable  
\cite[Proposition 4]{mC86}. Thus,  if $H$ is pointed cocommutative (e.g., a group algebra or an
enveloping algebra), then Remark (2) above gives that
$\CH A = (\cC A)^H$, the subalgebra of $H$-invariants in the 
extended center of $A$.
\end{example}

\begin{example}
Let $A$ be a Poisson algebra.
With $H = U\fg$ as in \S\ref{SSS:Ug}, the algebra of $H$-invariants in
$A$ is called the \emph{Poisson center} of $A$ and usually denoted by $Z_{\cP}(A)$; so
\[
Z_{\cP}(A) = \{ a \in A \mid \{a,\bdot \} = 0 \}.
\]
If $A$ is a commutative domain, then $\Q A = \cC A = \Fract A$, the 
field of fractions of $A$, and $\CH A = Z_{\cP}(\Fract A)$. 
\end{example}

An important general property of $\CH A$ is stated in the following proposition the
first part of which is due to Matczuk \cite{jM91}.

\begin{prop} 
\label{P:CH A}
If $A$ is $H$-prime then $\CH A$ is a $\k$-field.
Conversely, if $A$
is semiprime and $\CH A$ is a field, then $A$ is $H$-prime.
\end{prop}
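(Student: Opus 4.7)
The plan is to handle the two implications separately, invoking Lemma~\ref{L:QH A}(b) and the nondegeneracy identity \eqref{E:annQ} as the two workhorses.

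For the forward direction, let $0 \neq q \in \CH A$; I want to produce an inverse $q^{-1} \in \CH A$. The first step is to show that $q$ is not a zero divisor in $A$ by an $H$-primeness argument. The two-sided annihilator
\[
B = \{ a \in A : qa = 0 \} = \{ a \in A : aq = 0 \}
\]
(the two descriptions coincide because $q \in \cC A$) is an ideal, and I claim it is $H$-stable. For $a \in B$, $h \in H$, $d \in D_q$, applying \eqref{E:identity}, then pulling $\ant(h_2)$ past $q$ using $q \in \QH A$, and finally using $q \in \cC A$, one computes
\[
(h.a)(qd) = h_1.\bigl( a\,(\ant(h_2).(qd)) \bigr) = h_1.\bigl( (aq)(\ant(h_2).d) \bigr) = 0,
\]
so $((h.a)q)D_q = 0$, whence $(h.a)q = 0$ by \eqref{E:annQ}. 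Meanwhile $J := qD_q$ is a nonzero $H$-ideal of $A$ (the $H$-stability uses $h.(qd) = q(h.d)$ directly, and nonvanishing uses \eqref{E:annQ}), and $BJ = 0$. So $H$-primeness forces $B = 0$, and in particular $J \in \sE$.

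With $q$ non-zero-dividing, the formula $f(qd) := d$ defines an unambiguous $(A,A)$-bimodule map $J \to A$, and $q^{-1} := [f,f] \in \Q A$ satisfies $qq^{-1} = q^{-1}q = 1$ via \eqref{E:annQ}. Centrality $q^{-1} \in \cC A$ is inherited from $q \in \cC A$, and Lemma~\ref{L:QH A}(b) applied with $J \in \EH$ yields $q^{-1} \in \QH A$: on $a = qd \in J$ one has $aq^{-1} = q^{-1}a = d$, and $(h.a)q^{-1} = (q(h.d))q^{-1} = h.d$ since $h.d \in D_q$ by Lemma~\ref{L:QH A}(a). The inclusions $\k \subseteq \cen A \subseteq \CH A$ then make $\CH A$ a $\k$-field.

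For the converse, I argue the contrapositive: assuming $A$ semiprime and not $H$-prime, I exhibit a nontrivial central idempotent in $\CH A$. Choose nonzero $H$-ideals $I$ and $K$ with $IK = 0$ and set $N := \ann_A(I)$. In the semiprime setting $I \cap N = 0$ and $L := I \oplus N \in \sE$ (standard). A computation with \eqref{E:identity} shows $N$ is $H$-stable: for $a \in N$, $h \in H$, $b \in I$, one has $(h.a)b = h_1.(a \cdot \ant(h_2).b) = 0$ because $\ant(h_2).b \in I$. Since $K \subseteq N$, $N$ is nonzero, so $L \in \EH$. The projection $e \colon L \to A$, $i + j \mapsto i$, is an $(A,A)$-bimodule map, and $\epsilon := [e,e] \in \cC A$ is a central idempotent with $\epsilon \neq 0, 1$ because both $I$ and $N$ are nonzero. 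Lemma~\ref{L:QH A}(b) applied to $L$ gives $\epsilon \in \QH A$: for $a = i + j \in L$ one has $a\epsilon = i$ and $(h.a)\epsilon = e(h.i + h.j) = h.i$, using $h.i \in I$ and $h.j \in N$. Thus $\epsilon \in \CH A$ is a nontrivial idempotent, contradicting the hypothesis that $\CH A$ is a field.

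The decisive technical point in both directions is the $H$-stability of the relevant annihilator — of $B$ in the forward direction and of $N$ in the converse. Both hinge on the symmetry identity \eqref{E:identity} combined with the coherence of the $H$-action with $q \in \QH A$ (resp.~with the $H$-ideal $I$) on a nondegenerate ideal in $\sE$, and are then completed by an application of \eqref{E:annQ}. Once these are secured, the construction of $q^{-1}$ via the bimodule map $f$ and of the central idempotent $\epsilon$ via the projection $e$ follow the standard symmetric-quotient pattern, and the $\QH A$-memberships of both elements drop out of Lemma~\ref{L:QH A}(b).
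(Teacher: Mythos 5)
Your proof is correct and follows essentially the same route as the paper's: your $J=qD_q$ together with the map $qd\mapsto d$ is exactly the paper's construction of the inverse as $[f^{-1},f^{-1}]$ (you merely reprove by hand, via \eqref{E:identity}, the facts the paper cites from Cohen---that annihilators of $H$-ideals are $H$-ideals and hence vanish in the $H$-prime case), and in the converse your idempotent $\epsilon=[e,e]$ is the paper's $q$, with the paper's second element $q'$ being $1-\epsilon$. One small slip: the parenthetical $\cen A\subseteq \CH A$ is false in general, since an element $z\in A$ lying in $\QH A$ must satisfy $h.z=\gen{\e,h}z$ (take $d=1$ in the defining conditions), so only $(\cen A)^H$---and in particular $\k\cdot 1\subseteq A^H\subseteq \QH A$ by Lemma~\ref{L:QH A}(c)---lands in $\CH A$; this is all you actually need, so the conclusion that $\CH A$ is a $\k$-field stands.
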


\begin{proof}
Assume that $A$ is $H$-prime and let $0 \neq q \in \CH A$ be given.
Recall from Lemma~\ref{L:QH A}(a) that $D_q \in \EH$.
Consequently, $I: = qD_q$ is a nonzero ideal of $A$ by \eqref{E:annQ}
and it is and $H$-ideal by definition of $\QH A$.
Since $A$ is $H$-prime, it follows that $I \in \sE$ \cite[Corollary 3]{mC86}.
Therefore, the map $f \colon D_q \to I$, $d \mapsto qd$ is
an isomorphism of $(A,A)$-bimodules. Thus, the pair $(f,f)$
belongs to the set $\cH_{I}$ in \eqref{E:H_I} and $q = [f,f]$. The desired
inverse of $q$ is given by $[f^{-1},f^{-1}]$.

Next, assume that $A$ is semiprime but not $H$-prime. Then there
exists a nonzero $H$-ideal $I$ of $A$ such that $J =
\ann_AI \neq 0$. By \cite[Corollary 2]{mC86}, $J$ is an $H$-ideal.
Since $A$ is semiprime, the sum $I + J$ is
direct and $I+J$ has zero annihilator; so $I + J \in \EH$. 
Define $(A,A)$-bimodule maps $f,f' \colon I + J \to A$
by $f(i+j) = i$ and $f'(i+j) = j$ and put $q = [f,f], q'=[f',f'] \in \cC A$\,.  
Since $f$ and $f'$ are $H$-equivariant, we also have $q,q' \in
\QH A$. Thus, $0 \neq q,q' \in \CH A$ but $qq' = 0$\,,
whence $\CH A$ is not a field.
\end{proof}


\section{Rationality and $H$-rationality} 
\label{S:Hrat}

We continue to assume that $A \in \ModAlg H$ throughout this section.


\subsection{Hearts of $H$-primes} 
\label{SS:hearts}

For any $I \in \HSpec A$, we define 
\[
\cH_H(I) = \CH(A/I);
\]
this is always a $\k$-field by Proposition~\ref{P:CH A}.
The $H$-prime $I$ will be called \emph{$H$-rational} if the field 
extension $\cH_H(I)/\k$ is algebraic. For a trivial $H$-action, we obtain the usual definitions:
the \emph{heart} $\cH(P) = \cC(A/P)$ of a prime $P \in \Spec A$, which is called
\emph{rational} when  $\cH(P)/\k$ is algebraic.
As in the Introduction, we will denote the collections of 
rational and $H$-rational ideals of $A$ by $\Rat A$ and $\HRat A$, respectively.

\begin{example}
Let $A$ be a commutative noetherian Poisson algebra
and assume that $\Char \k = 0$. Then each Poisson prime
$I \in \PSpec A$ is prime \cite{kG06}. Instead of of $\cH_H(I)$
with $H= U\fg$ (\S\ref{SSS:Ug}), one generally writes $\cH_{\cP}(I)$; so
$\cH_{\cP}(I) = Z_{\cP}(\Fract A/I)$. Moreover, $H$-rational ideals of $A$
are called \emph{Poisson rational}: $I \in \PSpec A$ is Poisson
rational iff the field extension $Z_{\cP}(\Fract A/I)/\k$ is algebraic.
\end{example}

For the special case of a group algebra $H = \k G$, the following result is \cite[Proposition 12]{mL08} and for 
commutative differential algebras, it was proved in \cite[Proposition 1.2]{kG06}; 
see also \cite[Lemma 3.4]{LWWxx}.

\begin{prop} 
\label{P:hearts}
Let $P \in \Spec A$\,. There is an embedding of $\k$-fields
$\cH_H(P\byH) \into \cH(P)$. 
In particular, if $P \in \Rat A$ then $P\byH \in \HRat A$.
\end{prop}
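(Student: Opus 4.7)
The plan is to apply Lemma~\ref{L:extension} to the canonical surjection $\phi \colon A/I \onto A/P$, where $I := P\byH$. Because $\phi$ is a ring surjection, it is trivially centralizing, with $C_{A/P}(\phi(A/I)) = \cen(A/P)$, so that $\gen{\phi J} = \phi J$ for any ideal $J$ of $A/I$. Lemma~\ref{L:extension} will then deliver a centralizing ring map $\til\phi$ on $(A/I)\cC_\phi$ whose restriction to $\cC_\phi$ lands in $\cC(A/P) = \cH(P)$.

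The crux is to check that $\CH(A/I)$ is contained in $\cC_\phi$; equivalently, that $\phi(D_q) \in \sE(A/P)$ for every $q \in \CH(A/I)$. Lemma~\ref{L:QH A}(a) tells us that $D_q$ is an $H$-ideal of $A/I$ lying in $\sE(A/I)$, hence nonzero, since $A/I$ is $H$-prime and therefore nonzero. Pulling $D_q$ back along the quotient $\pi \colon A \onto A/I$ gives an $H$-ideal $\pi^{-1}(D_q)$ of $A$ that \emph{strictly} contains $I$. Were $\phi(D_q) = 0$, then $\pi^{-1}(D_q) \subseteq P$; by maximality of the $H$-core this would force $\pi^{-1}(D_q) \subseteq P\byH = I$, contradicting strict containment. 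Consequently $\phi(D_q)$ is a nonzero ideal of the prime ring $A/P$, so $\phi(D_q) \in \sE(A/P)$ as needed.

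Once this is in place, Lemma~\ref{L:extension} furnishes a unital ring homomorphism $\cH_H(P\byH) = \CH(A/I) \to \cC(A/P) = \cH(P)$. The source is a $\k$-field by Proposition~\ref{P:CH A}, so any unital homomorphism out of it is automatically injective, yielding the desired embedding of $\k$-fields. The \emph{in particular} clause is then immediate: a subfield of an algebraic extension of $\k$ is itself algebraic over $\k$, so rationality of $P$ transfers to $H$-rationality of $P\byH$.

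The main obstacle, and the only step that is not purely formal, is the verification $\CH(A/I) \subseteq \cC_\phi$. This is exactly the point where the defining property of the $H$-core (maximality among $H$-ideals contained in $P$) is used in an essential way, in concert with the $H$-primeness of $A/I$ furnished by Proposition~\ref{P:CH A} to guarantee $D_q \neq 0$. Everything else is bookkeeping around Lemma~\ref{L:extension}.
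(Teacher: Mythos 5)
Your proposal is correct and follows essentially the same route as the paper: both reduce to the $H$-prime quotient (the paper by assuming $P\byH=0$, you by working in $A/I$), verify $\phi(D_q)\in\sE(A/P)$ via Lemma~\ref{L:QH A}(a) together with the maximality of the $H$-core, and then invoke Lemma~\ref{L:extension} to land $\CH$ inside $\cC(A/P)$, with injectivity automatic since the source is a field.
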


\begin{proof}
We may assume that $P\byH = 0$.
Thus $\CH A$ is a field and it suffices to
construct a $\k$-algebra map $\CH A \to \cC(A/P)$. For a given
$q \in \CH A$, we know that $D_q \in \EH$ by Lemma~\ref{L:QH A}(a). Therefore, $D_q \nsubseteq P$. 
Letting $\phi \colon A \onto A/P$ 
denote the canonical epimorphism, we have $\phi(D_q) \in \sE(A/P)$. Thus, we may apply 
Lemma~\ref{L:extension}, with $\CH A \subseteq \cC_\phi$,
and we obtain an extension $\til{\phi}$ of $\phi$ such that
$\til\phi(\CH A) \subseteq \cC(A/P)$. This is the desired homomorphism.
\end{proof}

Thus, we have a well-defined map $\kHrat = \kH\big|_{\Rat A} \colon \Rat A \to \HRat A$.
In contrast with the map $\kH \colon \Spec A \to \HSpec A$, which is often surjective, 
surjectivity of $\kHrat$ seems to require stronger hypotheses. 

\begin{example} \label{EX:nonsurjective}
Assume that $\Char \k = 0$ and let
$A = \k(x,y)$ be the rational function field over $\k$, equipped with the Poisson bracket 
that is determined by $\{x,y\} = x$. 
Of course, $A$ has no rational primes at all, yet it is not hard to see that $Z_{\cP}(A) = \k$; 
so the zero ideal is Poisson rational. On the other
hand, considering the polynomial algebra $\k[x,y]$ with the above Poisson bracket,
the zero ideal is still Poison rational, but now it is also the Poisson kernel of 
any maximal ideal of $\k[x,y]$ that does not contain $x$. Indeed, it is 
easy to see that all nonzero Poisson primes of $\k[x,y]$ contain $x$.
\end{example}


\subsection{Rational strata} \label{SS:strata}

It would be interesting to have a description of the $\kHrat$-fiber 
$\Rat_IA:= \Spec_IA \cap \Rat A = \{ P \in \Rat A \mid P\byH = I \}$ over a given $I \in \HRat A$.
The following result may serve as a first approximation. 
For any ideal $I$ of $A$, we may consider the convolution algebra $\Hom_\k(H,A/I)$
and the ``hit'' action $H \acts \Hom_\k(H,A/I)$ that is defined by 
$(h \rhk f)(k) = f(kh)$ for $h,k \in H$ and $f \in \Hom_\k(H,A/I)$ \cite[10.4.2]{mL18}. The map
\[
\alpha_I  \colon A \tto \Hom_\k(H,A/I), \quad a \mapsto (h \mapsto h.a + I)
\]
is a map in $\ModAlg H$ and $I\byH = \Ker\alpha_I$.

\begin{prop}
\label{P:fibres}
Given $I \in \HSpec A$, there is a bijection
\begin{center}
\begin{tikzpicture}[baseline=(current  bounding  box.center),  >=latex]
\node(11) at (-4,1.2){$\Bigl\{$%
$P \in \Spec A \mid A/P \cong \k \text{ \rm and } P\byH = I$%
$\Bigr\}$};
\node(12) at (4,1.2){$\Bigl\{$%
\begin{minipage}{1.5in}
\centering
\rm
embeddings $A/I \into H^*$ \\
in $\ModAlg H$
\end{minipage}%
$\Bigr\}$};
\node(31) at (-4,.5){$\upin$};
\node(32) at (4,.5){$\upin$};
\node(21) at (-4,0){$P$};
\node(22) at (4,0){$\alpha_P$};
\draw[<->]
(11) edge node[auto] {\tiny \rm bij.} (12)
(21) edge (22);
\end{tikzpicture}
\end{center}
\end{prop}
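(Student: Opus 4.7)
The plan is to exploit the map $\alpha_I$ defined just before the statement, together with the identification $\Hom_\k(H,\k)=H^*$ (convolution algebra with hit action), and to construct the inverse via evaluation at $1_H$.

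First, I would check that $P\mapsto \alpha_P$ lands in the right codomain. Given $P\in\Spec A$ with $A/P\cong \k$ and $P\byH=I$, the canonical map $\Hom_\k(H,A/P)\cong \Hom_\k(H,\k)=H^*$ is an isomorphism in $\ModAlg H$. Since $\Ker\alpha_P=P\byH=I$ (as recorded in the paragraph preceding the statement) and $\alpha_P$ lies in $\ModAlg H$, it factors through an injection $\bar\alpha_P\colon A/I\hookrightarrow H^*$ in $\ModAlg H$, which I still denote $\alpha_P$.

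For the inverse, given an embedding $\beta\colon A/I\hookrightarrow H^*$ in $\ModAlg H$, I would define
\[
\lambda_\beta\colon A\tto \k,\qquad \lambda_\beta(a):=\beta(a+I)(1_H).
\]
To see that $\lambda_\beta$ is a $\k$-algebra homomorphism, note that evaluation at $1_H$ is an algebra map $H^*\to\k$: because $\Delta(1_H)=1_H\ot 1_H$, we have $(f\ast g)(1_H)=f(1_H)g(1_H)$, and $\lambda_\beta(1)=\e(1_H)=1$ since $\beta$ is unital and $1_{H^*}=\e$. Thus $P_\beta:=\Ker\lambda_\beta$ is a (maximal) ideal with $A/P_\beta\cong\k$.

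The crucial computation reconciling the two constructions uses the hit action. For $a\in A$ and $h\in H$,
\[
\alpha_{P_\beta}(a)(h)=h.a+P_\beta=\lambda_\beta(h.a)=\beta\bigl((h.a)+I\bigr)(1_H)=\bigl(h\rhk \beta(a+I)\bigr)(1_H)=\beta(a+I)(h),
\]
where the penultimate equality holds because $\beta$ is an $H$-module map and the last one follows from $(h\rhk f)(k)=f(kh)$ with $k=1_H$. Hence $\alpha_{P_\beta}=\beta$, which in particular forces $P_\beta\byH=\Ker\alpha_{P_\beta}=I$, so $P_\beta$ is in the left-hand set. Conversely, starting from $P$ in the left-hand set, one gets $\lambda_{\alpha_P}(a)=\alpha_P(a)(1_H)=a+P$, whose kernel is $P$. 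Therefore the two assignments are mutually inverse, yielding the bijection.

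The proof is essentially definitional, so there is no deep obstacle; the one point requiring care is the identification $\beta(h.a+I)(1_H)=\beta(a+I)(h)$, which is exactly what pins the hit action on $H^*$ to the $H$-action on $A/I$ and makes evaluation at $1_H$ the correct inversion device.
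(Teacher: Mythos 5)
Your proposal is correct and is essentially the paper's own argument: your forward map is the paper's $\til{\phi}(a)(h)=\phi(h.a)$ (i.e.\ $\alpha_P$), and your inverse via evaluation at $1_H$ is precisely the inverse of the $\Hom$-$\ot$ adjunction the paper invokes, with the same checks that algebra maps correspond to algebra maps and that $\Ker\til{\phi}=(\Ker\phi)\byH$. The only difference is presentational: you verify the inverse and the hit-action compatibility by hand rather than citing the adjunction isomorphism.
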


\begin{proof}
The set of all primes $P$ with 
$A/P \cong \k$ is in bijection with the set of all algebra maps
$A \to \k$\,.
Consider the isomorphism $\til{\phantom{x}} \colon \Hom_\k(A,\k) \iso \Hom_H(A,H^*)$
that is 
the composite of the canonical $\k$-linear isomorphisms
\[
\Hom_\k(A,\k) \iso \Hom_\k(H\ot_HA,\k) \iso \Hom_H(A,\Hom_\k(H,\k)) = \Hom_H(A,H^*)
\]
where the second isomorphism is $\Hom$-$\ot$ adjunction: $f \mapsto
f'$ with $f'(a)(h) = f(h\ot a)$; see, e.g., \cite[B.2.2]{mL18}. Explicitly,
\[
\til{\phi}(a)(h) = \phi(h.a) \qquad (\phi \in \Hom_\k(A,\k), h \in H, a \in A).
\]
If $\phi$ is an algebra map, then $\til{\phi}$ is an algebra map as well:
for $a,a' \in A$ and $h \in H$,
\[
\til{\phi}(aa')(h) = \phi(h.(aa'))  =  \sum \phi(h_1.a)\phi(h_2.a')
= \big( \til{\phi}(a)\til{\phi}(a') \big)(h);
\]
so $\til{\phi}(aa') = \til{\phi}(a)\til{\phi}(a')$. Conversely, if $\til{\phi}$ is an algebra map, then so is $\phi$:
\[
\phi(aa') = \til{\phi}(aa')(1) = (\til{\phi}(a)\til{\phi}(a'))(1) = \til{\phi}(a)(1)\til{\phi}(a')(1) = \phi(a)\phi(a'),
\]
Finally, $\Ker\til{\phi} = 
(\Ker\phi)\byH$. Thus, $\til{\phantom{x}}$ gives the desired bijection.
\end{proof}


\section{The topology of $\HSpec A$} 
\label{S:topology}

We continue to assume that $A \in \ModAlg H$.


\subsection{The Jacobson-Zariski topology}
\label{SS:JacZar}

The familiar Jacobson-Zariski topology on $\Spec A$ is defined by declaring all
subsets of the form $\Clo{S} = \{ P \in \Spec A \mid P \supseteq S \}$ for any subset $S \subseteq A$ to be closed
(e.g., \cite[1.3.4]{mL18}). In analogy with this definition,
we define the closed subsets of $\HSpec A$ to be those of the form
\[
\HClo S = \{ Q \in \HSpec A \mid Q \supseteq S \}.
\]
Evidently, $\HClo \varnothing = \HSpec A$, $\HClo{\{ 1\}} = \varnothing$, and 
$\HClo{\bigcup_\alpha  S_\alpha} = \bigcap_\alpha \HClo{S_\alpha}$ for
any family of subsets $S_\alpha \subseteq A$.
Since we may replace the set $S$ by the $H$-ideal of
$A$ that is generated by $S$ without changing $\HClo S$, the closed subsets of $\HSpec A$ can also
be described as the sets of the form $\HClo I$, where $I$ is an $H$-ideal of $A$.  
The defining property of $H$-prime ideals implies that $\HClo{I} \cup \HClo{J} = \HClo{IJ}$ for 
$H$-ideals $I$ and $J$. Thus, finite unions of closed sets are again closed,
thereby verifying the topology axioms. 
We list some of its basic properties in the following lemma.

\begin{lem}
\label{L:topology}
\begin{enumerate}
\item
The map $\kH \colon \Spec A \to \HSpec A$ is continuous.
\item
If all $H$-primes of $A$ are prime, then the topology of $\HSpec A$ is the initial
topology for the inclusion map $\HSpec A \into \Spec A$.
\item
Assume that $\kH$ is surjective and $\bigcap \Spec_I A  = I$
for all $I \in \HSpec A$. Then the topology of $\HSpec A$ is the final topology for $\kH$.
\end{enumerate}
\end{lem}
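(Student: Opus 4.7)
My plan is to verify each part by unwinding the definition of the $H$-topology and matching its closed sets against those of the Jacobson-Zariski topology on $\Spec A$.

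For (a), I would take a basic closed set $\HClo{S} \subseteq \HSpec A$ and compute its preimage directly: a prime $P$ satisfies $P\byH \supseteq S$ exactly when $H.s \subseteq P$ for every $s \in S$, that is, when $P \supseteq H.S$. Hence $\kH^{-1}(\HClo{S}) = \Clo{H.S}$ is closed, yielding continuity. For (b), the hypothesis is what gives meaning to the inclusion $\HSpec A \into \Spec A$ as an inclusion of sets. The subspace topology (the initial topology for this inclusion) has closed sets of the form $\Clo{S} \cap \HSpec A$, and this equals $\HClo{S}$ by the very definition of $\HClo{S}$. The two topologies therefore coincide, and there is essentially nothing further to check.

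The substantive part is (c). One direction is free from (a): since $\kH$ is continuous, the $H$-topology is coarser than the final topology. For the reverse direction, let $C \subseteq \HSpec A$ have the property that $\kH^{-1}(C)$ is closed in $\Spec A$. Setting $K := \bigcap_{P \in \kH^{-1}(C)} P$, we have $\kH^{-1}(C) = \Clo{K}$, and the goal is to show $C = \HClo{K}$.

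The inclusion $C \subseteq \HClo{K}$ uses the fiber-intersection hypothesis: if $I \in C$ then $\Spec_I A \subseteq \kH^{-1}(C)$, so every $P \in \Spec_I A$ contains $K$, and therefore $I = \bigcap \Spec_I A \supseteq K$. The reverse inclusion uses surjectivity of $\kH$: given $I \in \HSpec A$ with $I \supseteq K$, pick $P$ with $\kH(P) = I$; since $P \supseteq P\byH = I \supseteq K$, we have $P \in \Clo{K} = \kH^{-1}(C)$, whence $I = \kH(P) \in C$. The main subtle point is that both hypotheses in (c) do genuine work—the intersection condition drives the forward inclusion while surjectivity drives the backward one—and the key strategic choice is taking $K$ to be the intersection of all primes in $\kH^{-1}(C)$ rather than trying to manufacture an $H$-ideal directly from $C$, which is what lets both inclusions close cleanly.
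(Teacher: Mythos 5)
Your proposal is correct and follows essentially the same route as the paper: (a) by direct computation of preimages of basic closed sets, (b) by matching closed sets with the subspace topology, and (c) by writing the closed set $\kH^{-1}(C)$ as $\Clo{K}$ with $K$ its intersection and recovering $C = \HClo{K}$, using the fiber-intersection hypothesis in one direction and surjectivity of $\kH$ in the other. The only difference is organizational (two inclusions versus the paper's chain of set equalities, which also notes that $K=\bigcap C$ is an $H$-ideal), not mathematical.
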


\begin{proof}
(a)
The preimage of the closed set $C = \HClo I$, for an $H$-ideal $I$ of $A$, is given by
\[
\kH^{-1}(C) = \{ P \in \Spec A \mid P\byH \supseteq I \} = \{ P \in \Spec A \mid P \supseteq I \} = \Clo I,
\]
which is closed in $\Spec A$ for the Jacobson-Zariski topology.
Therefore, $\kH$ is continuous.

\smallskip

(b)
By definition, the closed sets in the initial
topology for the inclusion $\HSpec A \into \Spec A$ are exactly the sets 
$\HSpec A \cap \Clo{S} = \HClo S$ \cite[Chap. 1 \S 2.3]{Bou71}.

\smallskip

(c)
A subset $C \subseteq \HSpec A$ is closed in the final topology for $\kH$, by definition, if and only if
$\kH^{-1}(C)$ is closed in $\Spec A$ \cite[Chap. 1 \S 2.4]{Bou71}. By (a), this
includes all sets of the form $C = \HClo I$ for an $H$-ideal $I$ of $A$. Conversely, assume that
$D:=\kH^{-1}(C) = \bigcup_{I \in C} \Spec_I A$ is closed in $\Spec A$; so $D = \Clo{J}$, where
$J = \bigcap D$. Note that $J = \bigcap_{I \in C} \bigcap \Spec_I A = \bigcap C$,
an $H$-ideal of $A$.
Since $\kH$ is assumed surjective, it follows that $C$ has the desired form:
\[
\begin{aligned}
C &= \kH(D) = \kH(\Clo J) = \{ P\byH \mid P \in \Spec A, P \supseteq J\} \\
& =  \{ P\byH \mid P \in \Spec A, P\byH  \supseteq J\} = \{ Q \in \HSpec A \mid  Q \supseteq J\} = \HClo J.
\qedhere
\end{aligned}
\]
\end{proof}

If $A$ has the maximum condition on ideals, then the condition $\bigcap \Spec_I A  = I$
for all $I \in \HSpec A$ in part (b) of the lemma is equivalent to the requirement that $H$-cores
of prime ideals of $A$ are semiprime. This requirement is satisfied for all group actions and for 
all actions of cocommutative Hopf algebras in characteristic $0$ \cite{LNY20}.


\subsection{Toward a Dixmier-Moeglin equivalence with $H$-action}
\label{SS:DM}

Let $I \in \HSpec A$. We will say that $I$ is \emph{$H$-locally closed} if the one-point set $\{ I \}$
is a locally closed subset in the topology of $\HSpec A$ (\S\ref{SS:JacZar}), that is,  $\{ I \}$ is
open in its closure $\bar{\{ I \}} = \HClo I$ or, equivalently, $\HClo I \setminus \{ I\}$ is a closed subset of 
$\HSpec A$. Explicitly, this means that
\[
I \ \subsetneqq \ \bigcap
\{ J \in \HSpec A \mid J \supsetneqq I \}.
\] 
Furthermore, $I$ will be called \emph{$H$-primitive} if 
$I = P\byH$ for some primitive ideal $P$ of $A$.

\smallskip

To summarize, the following three properties of $H$-prime ideals were considered in the foregoing:
(i) $H$-local closedness,  (ii) $H$-primitivity, and (iii) $H$-rationality. 
The following question has been studied in various settings before; see the examples below.
\begin{question}[Dixmier-Moeglin equivalence with $H$-action]
When are (i)--(iii) equivalent?
\end{question}
It turns out that the implications (i) $\Rightarrow$ (ii) $\Rightarrow$ (iii) hold under fairly general circumstances.
Indeed, a wide range of algebras $A$ shares the following two properties. 
\begin{description}
\item[\textbf{Weak Nullstellensatz}]
\index{Nullstellensatz!weak}%
The Schur division algebra
$\End_{A}(V)$ of very irreducible left $A$-module $V$ is algebraic over $\k$.
\item[\textbf{Jacobson Property}]
Every prime ideal of $A$ is an intersection of primitive ideals.
\end{description}
For example, both statements apply to
any countably generated noetherian algebra $A$ over an uncountable base field $\k$ and 
they also hold for many algebras over arbitrary fields; see \cite[II.7]{kBkG02} or \cite[Chapter 9]{jMcCjR87}.
The following lemma records some rather straightforward instances of the aforementioned 
implications.

\begin{lem}
\label{L:DMeasy}
\begin{enumerate}
\item
If $\kH$ is surjective and $A$ has the Jacobson property, then every $H$-locally closed $H$-prime ideal of $A$
is $H$-primitive.
\item
If $A$ satisfies the weak Nullstellensatz, then every $H$-primitive ideal of $A$ is $H$-rational.
\end{enumerate}
\end{lem}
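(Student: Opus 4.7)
The plan is to prove parts (a) and (b) separately, both following fairly cleanly from basic properties of $H$-cores combined with classical facts about primitive rings.

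For part (a), I would start with an $H$-locally closed $I \in \HSpec A$ and use surjectivity of $\kH$ to pick a prime $P \in \Spec A$ with $P\byH = I$. The Jacobson property applied to $P$ yields $P = \bigcap_\alpha M_\alpha$ with each $M_\alpha \supseteq P$ primitive. The key formal point is that $H$-cores commute with arbitrary intersections, which is immediate from $J\byH = \{a \in A \mid H.a \subseteq J\}$:
\[
I \;=\; P\byH \;=\; \bigcap_\alpha M_\alpha\byH.
\]
Each $M_\alpha\byH$ is an $H$-prime containing $I$ (see \S\ref{SS:Intro3}). If every $M_\alpha\byH$ were to strictly contain $I$, then $\bigcap\{J \in \HSpec A \mid J \supsetneqq I\}$ would be contained in $\bigcap_\alpha M_\alpha\byH = I$, contradicting $H$-local closedness. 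Hence some $M_\alpha\byH$ equals $I$, and as $M_\alpha$ is primitive this exhibits $I$ as $H$-primitive.

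For part (b), let $I = P\byH$ with $P$ primitive, and choose a simple left $A$-module $V$ with $P = \ann_A V$. By Proposition~\ref{P:hearts}, there is an embedding of $\k$-fields $\cH_H(I) \hookrightarrow \cH(P) = \cC(A/P)$, so it suffices to prove that the latter is algebraic over $\k$. The Schur division ring $D = \End_A(V)$ is algebraic over $\k$ by the weak Nullstellensatz, so its center $\cen(D)$ is algebraic over $\k$ as well. The argument is then completed by the classical fact that for the primitive ring $R = A/P$ with faithful simple module $V$, the extended centroid embeds into $\cen(D)$: given $q \in \cC(R)$, one uses $D_q \in \sE(R)$ together with $D_q V = V$ (from faithfulness and simplicity of $V$) to construct an $R$-linear endomorphism $\til{q}$ of $V$ satisfying $\til{q}(rv) = (qr)v$ for $r \in D_q$, $v \in V$, and checks that $\til{q}$ commutes with $D$ via the identity $\til{q}(d(sw)) = (qs)d(w) = d(\til{q}(sw))$ for $s \in D_q$, $d \in D$, $w \in V$. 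Since $\cen(D)$ is algebraic over $\k$, so is $\cC(R)$, and hence so is $\cH_H(I)$.

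The main obstacle is the embedding $\cC(R) \hookrightarrow \cen(D)$ invoked in part (b). Although classical, verifying that $\til{q}$ is well-defined is the only non-formal step: it requires either a density-type argument (comparing $R$ to $\End_D V$ via Jacobson density) or an appeal to the theory of symmetric quotient rings of primitive rings. Part (a), by contrast, is a purely formal consequence of the interaction between $H$-cores and intersections and presents no real difficulty beyond setting up the contradiction with $H$-local closedness.
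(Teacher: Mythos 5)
Your proposal is correct and follows essentially the same route as the paper: part (a) is the paper's argument verbatim (surjectivity of $\kH$, the Jacobson property, the fact that $\bdot\byH$ commutes with intersections, and $H$-local closedness forcing $I = Q_\lambda\byH$ for some primitive $Q_\lambda$), and part (b) likewise reduces via Proposition~\ref{P:hearts} to the embedding of $\cH(P)$ into the Schur division algebra $\End_A(V)$, which the paper simply cites from \cite[Proposition E.2]{mL18} rather than reproving as you sketch. Incidentally, the well-definedness of your map $\til{q}$ needs no density theorem: if $\sum_i r_iv_i = 0$ with $r_i \in D_q$ but $w := \sum_i (qr_i)v_i \neq 0$, then $D_q w = 0$ and hence $D_q V = D_q(A/P)w = 0$, contradicting faithfulness of $V$, so only simplicity and faithfulness are used.
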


\begin{proof}
(a)
Let $I \in \HSpec A$ be $H$-locally closed. By our hypotheses, $I = P\byH$ for some $P \in \Spec A$
and $P = \bigcap_\lambda Q_\lambda$ for some family $(Q_\lambda)$ of primitive ideals of $A$.
Since the core operator $\bdot\byH$ evidently commutes with intersections, we obtain 
$I = \bigcap_\lambda Q_\lambda\byH$. Finally, all $Q_\lambda\byH$ belong to $\HClo I$ and $I$ is 
locally closed. So we must have $I = Q_\lambda\byH$ for some $\lambda$ and, therefore, $I$ is $H$-primitive.

\smallskip

(b)
Assume that $I = P\byH$ for some primitive ideal $P$ of $A$; say $P$ is the annihilator of the
irreducible $A$-module $V$. Then the heart $\cH(P)$ embeds into the Schur division algebra $\End_A(V)$
\cite[Proposition E.2]{mL18}. Our hypothesis on $A$ now implies that $\cH(P)/\k$ is algebraic
and Proposition~\ref{P:hearts} further implies that $\cH_H(I)/\k$ is algebraic as well, showing that
$I$ is $H$-rational.
\end{proof}

Stronger hypotheses are needed to ensure the validity of (iii) $\Rightarrow$ (i).

\begin{example}
\label{EX:Ug}
By classical results of Hilbert, the ordinary Dixmier-Moeglin equivalence, without $H$-action, holds for primes
of any affine commutative $\k$-algebra, with ``primitive''
being the same as ``maximal.''
For an affine commutative Poisson algebra $A$ and $H = U\fg$ as in \ref{SSS:Ug}, the 
above Question was originally posed in \cite{kBiG03} (for $\k = \CC$);
the equivalence is known as the \emph{Poisson Dixmier-Moeglin equivalence} in this setting. 
Lemma~\ref{L:DMeasy} covers the easy implications, (i) $\Rightarrow$ (ii) $\Rightarrow$ (iii).
It was shown in \cite{BLSM17} that (iii) $\Rightarrow$ (ii) also holds---so (ii) and (iii) are in 
fact equivalent---but (iii) $\Rightarrow$ (i) can fail if the Krull dimension of $A$ is at least $4$.
\end{example}

\begin{example}
\label{EX:kG}
Let $G$ be an affine algebraic group over an algebraically closed field $\k$ 
that acts rationally on the $\k$-algebra $A$ (\ref{SSS:kG}). 
Assume that $A$ has the Jacobson property and satisfies the weak Nullstellensatz.
Then, again, Lemma~\ref{L:DMeasy} gives 
(i) $\Rightarrow$ (ii) $\Rightarrow$ (iii) with $H = \k G$.  It is also know in this setting, that every $G$-rational 
ideal of $A$ has the form $P\byH$ for some $P \in \Rat A$ and that $P$ is locally closed if and only if
$P\byH$ is $H$-locally closed \cite{mL09}.
Thus, if $A$ satisfies the ordinary Dixmier-Moeglin equivalence, and hence all rational primes are
locally closed, then the Question above has a positive answer
for $A$ and $H = \k G$.
\end{example}


\bibliographystyle{plain}

\def\cprime{$'$}


\end{document}